\documentclass{aims}
\usepackage{amsmath}
  \usepackage{paralist}
  \usepackage{graphics} 
  \usepackage{epsfig} 
\usepackage{graphicx}  \usepackage{epstopdf}
 \usepackage[colorlinks=true]{hyperref}
\hypersetup{urlcolor=blue, citecolor=red}

  \textheight=8.2 true in
   \textwidth=5.0 true in
    \topmargin 30pt
     \setcounter{page}{1}



\newtheorem{theorem}{Theorem}[section]
\newtheorem{corollary}{Corollary}

\newtheorem{lemma}[theorem]{Lemma}

\theoremstyle{definition}

\newtheorem{remark}{Remark}

\usepackage{mathtools}
\usepackage{algpseudocode}
\usepackage{algorithm}

\newcommand{\gamD}{\Gamma^\textup{D}}
\newcommand{\gamN}{\Gamma^\textup{N}}
\newcommand{\arum}{\ensuremath{\mathcal{A}}}
\newcommand{\rum}[1]{\mathbb{#1}}
\newcommand{\po}{{\partial\Omega}}
\newcommand{\fob}{\mathcal{F}}

\newcommand{\hmhalf}{H_\diamond^{-1/2}(\partial\Omega)}
\newcommand{\ds}{\eta}
\newcommand{\dsr}{\delta\sigma}
\newcommand{\imbed}{\hookrightarrow}
\newcommand{\proja}{\mathcal{P}_{\arum_0}}
\newcommand{\ssc}[1]{\ensuremath{\mathcal{S}_{#1}}}
\newcommand{\sref}[1]{Section ~\ref{#1}}

\DeclareMathOperator*{\argmin}{argmin}
\DeclareMathOperator{\sign}{sgn}
\DeclareMathOperator*{\supp}{supp}

\DeclarePairedDelimiter{\norm}{\lVert}{\rVert} 
\DeclarePairedDelimiter{\abs}{\lvert}{\rvert} 
\DeclarePairedDelimiter{\inner}{\langle}{\rangle} 
\newcommand{\absm}[1]{\abs{#1}}
\newcommand{\normm}[1]{\norm{#1}}
\newcommand{\innerm}[1]{\inner{#1}}

\hyphenation{app-li-ca-tions}
\hyphenation{prior}

\title[3D Reconstruction for Partial Data EIT using a Sparsity Prior] 
      {3D Reconstruction for Partial Data Electrical Impedance Tomography Using a Sparsity Prior}

\author[H. Garde and K. Knudsen]{}

\subjclass{Primary: 65N20, 65N21.}
 \keywords{Impedance tomography, sparsity, partial data, prior information, numerical reconstruction.}


\thanks{The authors are supported by ERC project High-Definition Tomography, Advanced Grant No. 291405.}

\begin{document}
\maketitle

\centerline{\scshape Henrik Garde and Kim Knudsen}
\medskip
{\footnotesize
 \centerline{Department of Applied Mathematics and Computer Science}
   \centerline{Technical University of
    Denmark}
   \centerline{2800 Kgs. Lyngby, Denmark}
} 



\bigskip


\begin{abstract}
  \noindent In electrical impedance tomography the electrical
  conductivity inside a physical body is computed from electro-static
  boundary
  measurements. 
  The focus of this paper is to extend recent result for the 2D
  problem to 3D. Prior information about the sparsity and spatial distribution of the conductivity is used to improve
  reconstructions for the partial data problem with Cauchy
  data measured only on a subset of the boundary. A sparsity prior is enforced
  using the $\ell_1$ norm in the penalty term of a Tikhonov functional, and spatial
  prior information is incorporated by applying a spatially
  distributed regularization parameter. The optimization problem is
  solved numerically using a generalized conditional gradient method
  with soft thresholding. Numerical examples show the effectiveness of
  the suggested method even for the partial data problem with
  measurements affected by noise.
\end{abstract}

\section{Introduction}

Sparse reconstruction for electrical impedance tomography (EIT) with full boundary data has been utilized in \cite{gehre2012,jin2012,jin2011} and are based on algorithms from \cite{bonesky2007,bredies2009}. A similar approach was used in 2D for the partial data problem in \cite{GardeKnudsen2014} with use of a spatially varying regularization parameter, and this paper extends the algorithm to the 3D partial data problem. The main contributions are in deriving the Fr\'{e}chet derivative for the algorithm, and in the numerical results in 3D.

The inverse problem in EIT consists of reconstructing an electrical
conductivity distribution in the interior of an object from
electro-static boundary measurements on the surface of the object. The
underlying mathematical problem is known as the Calder\'on problem in
recognition of Calder\'on's seminal paper \cite{Calderon1980}. While
the Calder\'on problem can also be considered in two dimensions,
physical electric fields are intrinsically three dimensional, and thus
the reconstruction problem in EIT should ideally use a 3D reconstruction
algorithm to reduce modelling errors in the reconstruction.

Consider a bounded domain $\Omega\subset \mathbb{R}^3$ with smooth boundary $\po.$ In
order to consider partial boundary measurements we introduce the
subsets $\gamN,\gamD \subseteq \po$ for the Neumann and Dirichlet data
respectively. Let $\sigma \in L^\infty(\Omega)$ with $0< c\leq \sigma$ a.e.
denote the conductivity distribution in $\Omega$. Applying a boundary
current flux $g$ (Neumann condition) through $\gamN\subseteq \po$
gives rise to the interior electric potential $u$ characterized as the
solution to
\begin{equation}
  \nabla \cdot(\sigma\nabla u) = 0 \text{ in } \Omega,\quad	\sigma \frac{\partial u}{\partial \nu} = g \text{ on } \partial\Omega, \quad \int_{\gamD} u|_{\po}\,ds = 0, \label{pde}
\end{equation}
where $\nu$ is an outward unit normal to $\po.$ The latter condition
in \eqref{pde} is a grounding of the total electric potential along the
subset $\gamD\subseteq \po.$ To be precise we define the spaces 
\begin{align*}
  L_\diamond^2(\po) &\equiv \{g \in L^2(\partial\Omega) \mid \int_{\po}
  g \,ds = 0\}, \\ 
  \hmhalf &\equiv \{g \in H^{-1/2}(\partial\Omega)
  \mid \inner{g,1} = 0\},
\end{align*}
consisting of boundary functions with mean zero, and the spaces
\begin{align*}
  H_{\gamD}^1(\Omega) &\equiv \{u\in H^1(\Omega)\mid u|_{\po} \in
  H_{\gamD}^{1/2}(\po)\,\}, \\ 
  H_{\gamD}^{1/2}(\po) &\equiv \{f\in H^{1/2}(\po)\mid \int_{\gamD} f\,ds=0\},
\end{align*}
consisting of functions with mean zero on $\gamD$ designed to
encompass the partial boundary data.  Using standard elliptic theory
it follows that \eqref{pde} has a unique solution $u \in
H_{\gamD}^1(\Omega)$ for any $g \in \hmhalf$. This defines the Neumann-to-Dirichlet map (ND-map) $\Lambda_\sigma : H_\diamond^{-1/2}(\po) \to H_{\gamD}^{1/2}(\po)$ by $\Lambda_\sigma g = u|_{\po}$, and the partial ND-map as $(\Lambda_\sigma g)|_{\gamD}$ for $\supp{g} \subseteq \gamN.$

Recently the partial data Calder\'on problem has been studied
intensively. In 3D uniqueness has been proved under certain conditions
on $\gamD$ and $\gamN$ \cite{BukhgeimUhlmann2002, Isakov2007, KenigSjostrandUhlmann2007, Knudsen2006}. 
Also stability estimates of log-log type have been
obtained for the partial problem \cite{HeckWang2006}; this suggests
that the partial data problem is even more ill-posed and hence
requires more regularization than the full data problem which has log
type estimates \cite{Alessandrini1988}.

The data considered here consist of $K$ pairs of Cauchy data taken on the subsets $\gamD$ and $\gamN,$ i.e.
\begin{equation}
	\{(f_k,g_k) \mid g_k \in \hmhalf, \; \supp(g_k)\subseteq \gamN, f_k =  \Lambda_{\sigma} g_k|_{\gamD} \}_{k=1}^K. \label{data}
\end{equation}
We assume that the true conductivity is given as $\sigma = \sigma_0 + \delta\sigma$, where $\sigma_0$ is a known background conductivity. Define the closed and convex subset 
\begin{equation}
	\arum_0 \equiv \{\delta\gamma \in H_0^1(\Omega)\mid c\leq \sigma_0+\delta\gamma\leq c^{-1} \text{  a.e. in } \Omega\}  \label{a0ref}
\end{equation}
for some $c \in (0,1)$, and $\sigma_0\in H^1(\Omega)$ where $c\leq \sigma_0\leq c^{-1}$. Similarly define 
\[
	\arum\equiv \arum_0 + \sigma_0 = \{\gamma\in H^1(\Omega)\mid c\leq \gamma\leq c^{-1} \text{ a.e. in } \Omega, \gamma|_{\po} = \sigma_0|_{\po}\}.
\]
The inverse problem is then to approximate $\delta\sigma\in\arum_0$ given the data \eqref{data}. 

Let $\{\psi_j\}$ denote a chosen orthonormal basis for $H_0^1(\Omega).$ For sparsity regularization we approximate $\delta\sigma$ by $\argmin_{\delta\gamma\in\arum_0}\Psi(\delta\gamma)$ using the following Tikhonov functional
\begin{equation}
	\Psi(\delta\gamma) \equiv \sum_{k=1}^K R_k(\delta\gamma) + P(\delta\gamma), \enskip\delta\gamma\in\arum_0, \label{psieq}
\end{equation} 
with the discrepancy terms $R_k$ and penalty term $P$ given by 
\[
	R_k(\delta\gamma) \equiv \frac{1}{2}\normm{\Lambda_{\sigma_0+\delta\gamma}g_k - f_k}_{L^2(\gamD)}^2, \quad P(\delta\gamma) \equiv \sum_{j=1}^\infty \alpha_j\absm{c_j},
\]
for $c_j \equiv \inner{\delta\gamma,\psi_j}$. The regularization parameter $\alpha_j$ for the
sparsity-promoting $\ell_1$ penalty term $P$ is distributed such that
each basis coefficient can be regularized differently; we will return to
this in \sref{sec:prior}. It should be noted how easy and natural the
use of partial data is introduced in this way, simply by only
minimizing the discrepancy on $\gamD$ where the Dirichlet data is known and ignoring the rest of the boundary.

\begin{remark}
	The non-linearity of $\sigma\mapsto\Lambda_\sigma$ leads to a non-convex discrepancy term, i.e. $\Psi$ is non-convex. When applying a gradient based optimization method, the best we can hope is to find a local minimum.
\end{remark}

This paper is organised as follows: in \sref{sec:SparseReconstruction}
we derive the Fr\'echet derivative of $R_k$ and reformulate the
optimization problem using the generalized conditional gradient method as a
sequence of linearized optimization problems. In \sref{sec:prior} we
explain the idea of the spatially dependent regularization parameter
designed for the use of prior information. Finally, in
\sref{sec:NumericalResults} we show the feasibility of the algorithm
by a few numerical examples.

\section{Sparse Reconstruction}\label{sec:SparseReconstruction}

In this section the sparse reconstruction of $\dsr$ based on the optimization problem \eqref{psieq}, is investigated for a bounded domain $\Omega\subset\rum{R}^3$ with smooth boundary. The penalty term emphasizes that $\dsr$ should only be expanded by few basis functions in a given orthonormal basis. The partial data problem comes into play in the discrepancy term, in which we only fit the data on part of the boundary. Ultimately, this leads to Algorithm \ref{alg1} at the end of this section. 

For fixed $g$ let $u$ be the unique solution to \eqref{pde}. Define
the solution operator $F_g:\sigma\mapsto u$ and further its trace
$\fob_g: \sigma \mapsto u|_{\partial \Omega}$ (note that
$\Lambda_\sigma g = \fob_g(\sigma)$). In order to compute the
derivative of $\fob_g,$ let $\gamma\in\arum$ and $g\in
L^p(\po)\cap\hmhalf$ for $p\geq \frac{8}{5}$. Then following the
proofs of Theorem 2.2 and Corollary 2.1 in \cite{jin2011} whilst
applying the partial boundary $\gamD$ we have
\begin{equation}
	\lim_{\substack{\normm{\ds}_{H^1(\Omega)}\to 0 \\ \gamma+\ds\in\arum}}\frac{\normm{\fob_g(\gamma+\ds)-\fob_g(\gamma) - (\fob_g)'_\gamma\ds}_{H^{1/2}_{\gamD}( \partial\Omega)}}{\normm{\ds}_{H^1(\Omega)}} = 0. \label{fpplimit}
\end{equation}
The linear map $(\fob_g)'_\gamma$ maps $\eta$ to $w|_{\po}$, where $w$ is the unique solution to
\begin{equation}
	-\nabla\cdot(\gamma\nabla w) = \nabla\cdot(\ds\nabla F_g(\gamma)) \text{ in } \Omega, \quad \sigma\frac{\partial w}{\partial\nu} = 0 \text{ on } \partial\Omega, \quad \int_{\gamD}w|_{\partial\Omega}\,ds = 0. \label{fprimepde}
\end{equation}
Note that $(\fob_g)'_\gamma$ resembles a Fr\'echet derivative of $\fob_g$ evaluated at $\gamma$ due to \eqref{fpplimit}, however $\arum$ is not a linear vector space, thus the requirement $\gamma,\gamma+\ds\in\arum$.

The first step in minimizing $\Psi$ using a gradient descent type iterative algorithm is to determine a derivative to the discrepancy terms $R_k$.\ For this purpose the following corollary is applied, and is a special case of \cite[Theorem 3.1]{jin2011} for $\Omega$ being an open and bounded subset of $\rum{R}^3$ with smooth boundary.

\begin{corollary} \label{meyerthm}
	For $\gamma\in\arum$ there exists $Q(c)>2$ depending continuously on the bound $c$ from $\arum$, such that $\lim_{c\to 1}Q(c) = \infty$. For $q\in (2,Q(c))\cap[\frac{3}{2},\frac{3}{2}p]$ and $g\in L^p(\po)\cap\hmhalf$, there is the following estimate with $C$ only depending on $c$, $\Omega$ and $q$:
	\begin{equation}
		\normm{F_g(\gamma)}_{W^{1,q}(\Omega)} \leq C\normm{g}_{L^p(\po)}.
	\end{equation}
\end{corollary}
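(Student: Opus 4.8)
The plan is to establish the global $W^{1,q}$ estimate by combining the interior and boundary elliptic regularity theory for divergence-form equations with bounded measurable coefficients, in the spirit of Meyers' higher-integrability result. The key point is that although $\gamma \in \arum$ is only bounded (with $c \le \gamma \le c^{-1}$), the gradient of the solution $F_g(\gamma)$ gains a small amount of integrability beyond the natural $L^2$ estimate, and the range of admissible exponents shrinks to $2$ as the ellipticity contrast worsens (i.e. as $c \to 0$) but opens up to all of $[2,\infty)$ as $c \to 1$.

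First I would recall the variational formulation of \eqref{pde}: the solution $u = F_g(\gamma)$ satisfies $\int_\Omega \gamma \nabla u \cdot \nabla v \, dx = \inner{g,v}$ for all test functions $v$, with the grounding condition encoded in the function space. The basic energy estimate gives $\normm{u}_{H^1(\Omega)} \le C \normm{g}_{H^{-1/2}(\po)}$, which by Sobolev embedding on the boundary is controlled by $\normm{g}_{L^p(\po)}$ for the stated range of $p$. Next I would invoke the Meyers-type gradient estimate: the crucial analytic input is that solutions to uniformly elliptic equations with merely bounded coefficients satisfy a reverse H\"older inequality for the gradient, which via Gehring's lemma upgrades $\nabla u \in L^2$ to $\nabla u \in L^q_{\mathrm{loc}}$ for some $q>2$. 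The exponent $q$ is determined by the ellipticity ratio, and one quantifies $Q(c)$ precisely as the supremum of attainable exponents; the continuous dependence on $c$ and the blow-up $Q(c) \to \infty$ as $c \to 1$ follow because in the limit of constant coefficients the equation becomes the Laplacian, whose solutions are smooth.

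The treatment of the boundary requires more care, since the Neumann condition $\sigma \partial u/\partial \nu = 0$ on parts of $\po$ and the mean-zero constraint must be handled. Here I would use the smoothness of $\po$ to flatten the boundary locally and apply the reflection/even-extension technique so that the same reverse H\"older machinery applies up to the boundary, yielding the global estimate $\normm{u}_{W^{1,q}(\Omega)} \le C \normm{g}_{L^p(\po)}$; the constraint $q \le \tfrac{3}{2}p$ arises from matching the boundary trace regularity of $g \in L^p(\po)$ through the Sobolev trace theorem in dimension three, while $q \ge \tfrac{3}{2}$ ensures the embedding constants behave. Since this is a direct specialization of \cite[Theorem 3.1]{jin2011} to the three-dimensional smooth-boundary setting, I would simply verify that the hypotheses of that theorem are met and that the dimensional exponents $\tfrac{3}{2}$ and $\tfrac{3}{2}p$ specialize correctly from the general dimension $n$ formula (where one expects $\tfrac{n}{2}$ and $\tfrac{n}{2}p$).

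The main obstacle I anticipate is controlling the dependence of the constants and of the exponent range on the ellipticity bound $c$ in a fully quantitative and continuous manner, particularly establishing $\lim_{c \to 1} Q(c) = \infty$. The Gehring/Meyers argument gives existence of \emph{some} $q>2$ but tracking the explicit dependence $Q(c)$ and its continuity requires careful bookkeeping through the reverse H\"older constant, which itself depends continuously on the ratio $c^{-1}/c = c^{-2}$. Fortunately, since the statement is declared a special case of an already-established theorem, the substantive work reduces to citing that result and checking the three-dimensional exponent arithmetic rather than reproving the higher-integrability estimate from scratch.
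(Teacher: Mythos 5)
Your proposal is correct and ultimately takes the same route as the paper: the paper offers no independent proof but simply observes that the corollary is the specialization of \cite[Theorem 3.1]{jin2011} to a bounded smooth domain in $\mathbb{R}^3$, which is exactly where your argument lands after the (accurate) sketch of the underlying Meyers/Gehring machinery and the dimensional bookkeeping $[\tfrac{n}{2},\tfrac{n}{2}p] \to [\tfrac{3}{2},\tfrac{3}{2}p]$.
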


\begin{lemma} 
	Let $g_k \in L^p(\po)\cap \hmhalf$ with $p\geq \frac{8}{5}$, and $\chi_{\gamD}$ be a characteristic function on $\gamD$. Then there exists $c\in (0,1)$ as the bound in $\arum_0$ sufficiently close to 1, such that $\gamma = \delta\gamma + \sigma_0$ with $\delta\gamma\in\arum_0$ implies
	\begin{equation}
		G_k \equiv -\nabla F_{g_k}(\gamma)\cdot\nabla F_{\chi_{\gamD}(\Lambda_\gamma g_k-f_k)}(\gamma)\in L^{6/5}(\Omega)\subset H^{-1}(\Omega), \label{nablaJ}
	\end{equation}
	and the Fr\'echet derivative $(R_k)'_{\delta\gamma}$ of $R_k$ on $H_0^1(\Omega)$ evaluated at $\delta\gamma$ in the direction $\eta$ is given by
	\begin{equation}
		(R_k)'_{\delta\gamma}\eta = \int_{\Omega} G_k\ds \,dx, \enskip \delta\gamma+\ds\in \arum_0. \label{Jprimeeq}
	\end{equation}
\end{lemma}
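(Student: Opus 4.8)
The plan is to establish the integral representation \eqref{Jprimeeq} by combining the chain rule with an adjoint-state (duality) argument, and then to verify the integrability claim \eqref{nablaJ} separately using the Meyers-type estimate of Corollary~\ref{meyerthm}; the latter is where the hypotheses $p\geq\frac85$ and ``$c$ close to $1$'' are consumed. For the derivative, abbreviate $u\equiv F_{g_k}(\gamma)$ and let $r\equiv\Lambda_\gamma g_k-f_k$ be the residual on $\gamD$. Since $\delta\gamma\mapsto\gamma=\sigma_0+\delta\gamma$ is an affine shift and $R_k$ is half the squared $L^2(\gamD)$-norm of $\fob_{g_k}(\gamma)-f_k$, the differentiability \eqref{fpplimit}, the embedding $H^{1/2}_{\gamD}(\po)\hookrightarrow L^2(\gamD)$, and the chain rule for a quadratic functional give
\begin{equation*}
	(R_k)'_{\delta\gamma}\ds = \innerm{r,(\fob_{g_k})'_\gamma\ds}_{L^2(\gamD)} = \int_{\gamD} r\,w|_{\po}\,ds,
\end{equation*}
where $w$ solves \eqref{fprimepde}. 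The point is to turn this boundary pairing into a volume integral. I introduce the adjoint state $v\equiv F_{\chi_{\gamD}r}(\gamma)$, which is well defined because $\chi_{\gamD}r$ has vanishing mean on $\po$ (the mean of $r$ over $\gamD$ is zero, as both $\Lambda_\gamma g_k$ and $f_k$ lie in $H^{1/2}_{\gamD}(\po)$), so $\chi_{\gamD}r\in\hmhalf$.

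Two integrations by parts then close the computation. Testing the weak form of the $v$-equation with $w$ and using its Neumann datum yields $\int_\Omega\gamma\nabla v\cdot\nabla w\,dx=\int_{\gamD}r\,w|_{\po}\,ds$, which is exactly the pairing above. Testing the weak form of \eqref{fprimepde} with $v$ and integrating its right-hand side by parts yields $\int_\Omega\gamma\nabla w\cdot\nabla v\,dx=-\int_\Omega\ds\,\nabla u\cdot\nabla v\,dx$; here the boundary contribution drops because $\ds\in H_0^1(\Omega)$ has zero trace, and the conormal term from $w$ vanishes by the homogeneous Neumann condition in \eqref{fprimepde}. Equating the two identities gives $(R_k)'_{\delta\gamma}\ds=-\int_\Omega\ds\,\nabla F_{g_k}(\gamma)\cdot\nabla F_{\chi_{\gamD}r}(\gamma)\,dx=\int_\Omega G_k\ds\,dx$, which is \eqref{Jprimeeq}.

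It remains to prove $G_k\in L^{6/5}(\Omega)$, which simultaneously shows that $(R_k)'_{\delta\gamma}$ is bounded on $H_0^1(\Omega)$ via the dual embedding $L^{6/5}(\Omega)\hookrightarrow H^{-1}(\Omega)$ (dual to $H^1(\Omega)\hookrightarrow L^6(\Omega)$ in three dimensions). Taking $c$ near $1$ so that $Q(c)>\frac{12}{5}$, the exponent $q=\frac{12}{5}$ lies in $(2,Q(c))\cap[\frac32,\frac32 p]$ when $p\geq\frac85$, so Corollary~\ref{meyerthm} applied to $u=F_{g_k}(\gamma)$ gives $\nabla u\in L^{12/5}(\Omega)$. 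For the second factor I first bound $r$: the trace of $u$ inherits $W^{1-1/q,q}(\po)$ regularity, and Sobolev embedding on the two-dimensional surface $\po$ (together with the analogous regularity of the data $f_k$) places $r$, and hence $\chi_{\gamD}r$, in $L^{8/5}(\po)\cap\hmhalf$; Corollary~\ref{meyerthm} then gives $\nabla v\in L^{12/5}(\Omega)$ as well. Hölder's inequality with $\frac{5}{12}+\frac{5}{12}=\frac56$ finally yields $G_k=-\nabla u\cdot\nabla v\in L^{6/5}(\Omega)$.

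I expect the exponent bookkeeping, rather than the duality computation, to be the main obstacle. Because $G_k$ is a product of two gradients, both must be forced into $L^{12/5}(\Omega)$ merely to land in $L^{6/5}(\Omega)$, and $\frac{12}{5}$ is precisely the upper endpoint $\frac32 p$ permitted by Corollary~\ref{meyerthm} at the borderline value $p=\frac85$; this pins down the hypothesis on $p$ and forces $c$ close to $1$ so that $\frac{12}{5}<Q(c)$. The adjoint-state argument itself is routine once the correct dual problem $v=F_{\chi_{\gamD}r}(\gamma)$ is identified, the only genuine subtlety being that the zero trace of $\ds$ is what eliminates the stray boundary term.
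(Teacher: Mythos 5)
Your proof follows essentially the same route as the paper's: the chain rule identity followed by the two weak formulations (the adjoint-state pairing with $w$ and with $F_{\chi_{\gamD}r}(\gamma)$) for \eqref{Jprimeeq}, and the Meyers-type estimate of Corollary~\ref{meyerthm} combined with H\"older's inequality and the embedding $L^{6/5}(\Omega)\subset H^{-1}(\Omega)$ for \eqref{nablaJ}. The only deviation is your trace/surface-Sobolev detour to place the residual in $L^{8/5}(\po)$: the paper instead observes directly that $\Lambda_\gamma g_k\in H^{1/2}_{\gamD}(\po)$ and $f_k\in L^2_\diamond(\gamD)$ put the residual in $L^2_\diamond(\po)$ and applies the corollary with $p=2$, which is simpler, though your version is also valid.
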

\begin{proof}
  For the proof the index $k$ is suppressed. First it is proved that
  $G\in L^{6/5}(\Omega).$ Write $h \equiv \chi_{\gamD}(\Lambda_\gamma
  g-f)$ and note that $\Lambda_\gamma g \in H_{\gamD}^{1/2}(\po)$ and
  $f\in L^2_\diamond(\gamD)$, i.e. $h \in L^2_\diamond(\po) \subset
  L^2(\po)\cap H_\diamond^{-1/2}(\po)$. Now using Corollary
  \ref{meyerthm}, there exists $Q(c)>2$ such that
	\begin{equation}
		\normm{F_h(\gamma)}_{W^{1,q}(\Omega)} \leq C\normm{h}_{L^2(\po)}, \label{Fweq}
	\end{equation}
	where $q\in(2,Q(c))\cap[\frac{3}{2},3]$. Since $g\in L^{8/5}(\po)\cap\hmhalf$ then Corollary ~\ref{meyerthm} implies 
	\begin{equation}
		\normm{F_g(\gamma)}_{W^{1,\tilde{q}}(\Omega)} \leq \tilde{C}\normm{g}_{L^{8/5}(\Omega)}, \label{Fgeq}
	\end{equation}
	for $\tilde{q}\in (2,Q(c))\cap[\frac{3}{2},\frac{12}{5}]$. Choosing $c$ sufficiently close to $1$ leads to $Q(c)> \frac{12}{5}$. By \eqref{Fweq} and \eqref{Fgeq} then $\absm{\nabla F_h(\gamma)},\absm{\nabla F_g(\gamma)} \in L^{12/5}(\Omega)$, and H\"{o}lder's generalized inequality entails that $G\in L^r(\Omega)$ with $\frac{1}{r} = \frac{5}{12}+\frac{5}{12},$ i.e. $r = \frac{6}{5}$,
	\[
		G = -\nabla F_g(\gamma)\cdot \nabla F_h(\gamma) \in L^{6/5}(\Omega).
	\]
	The Sobolev embedding theorem \cite{sobolev} implies the embedding $H^1(\Omega)\imbed L^6(\Omega)$ as $\Omega\subset\rum{R}^3$. Thus $G \in L^{6/5}(\Omega) = (L^6(\Omega))'\subset (H^1(\Omega))' \subset (H_0^1(\Omega))' = H^{-1}(\Omega)$.
	
	Next we prove
        \eqref{Jprimeeq}. 
        $R'_{\delta\gamma}\eta$ is by the chain rule (utilizing that
        $\Lambda_\gamma g = \fob_g(\gamma)$) given as
	\begin{equation}
		R'_{\delta\gamma}\eta = \int_\po \chi_{\gamD}(\Lambda_\gamma g-f)(\fob_{g})_\gamma'\ds \,ds, \label{expectedprime}
	\end{equation}
	where $\chi_{\gamD}$ is enforcing that the integral is over $\gamD$. The weak formulations of \eqref{pde}, with Neumann data $\chi_{\gamD}(\Lambda_\gamma g-f)$, and \eqref{fprimepde} are
	\begin{align}
		\int_{\Omega} \gamma \nabla F_{\chi_{\gamD}(\Lambda_\gamma g-f)}(\gamma)\cdot \nabla v\,dx &= \int_{\po} \chi_{\gamD}(\Lambda_\gamma g-f)v|_{\po}\,ds, \enskip \forall v\in H^1(\Omega), \label{weak1} \\
		\int_{\Omega} \gamma \nabla w\cdot \nabla v\,dx &= -\int_{\Omega} \ds\nabla F_{g}(\gamma)\cdot \nabla v\,dx, \enskip \forall v\in H^1(\Omega). \label{weak2}
	\end{align}
	Now by letting $v \equiv w$ in \eqref{weak1} and $v \equiv F_{\chi_{\gamD}(\Lambda_\gamma g-f)}(\gamma)$ in \eqref{weak2}, we obtain using the definition $w|_{\po} = (\fob_g)'_\gamma\eta$ that
	\begin{align*}
		R'_{\delta\gamma}\ds &= \int_\po \chi_{\gamD}(\Lambda_\gamma g-f)(\fob_{g})_\gamma'\ds \,ds = \int_{\Omega} \gamma \nabla F_{\chi_{\gamD}(\Lambda_\gamma g-f)}(\gamma)\cdot \nabla w\,dx \\
		&=  -\int_{\Omega} \ds\nabla F_{g}(\gamma)\cdot \nabla F_{\chi_{\gamD}(\Lambda_\gamma g-f)}(\gamma)\,dx = \int_{\Omega}G\ds \,dx.
	\end{align*}
\end{proof}
\noindent Define 
\[
	R'_{\delta\gamma} \equiv \sum_{k=1}^K (R_k)'_{\delta\gamma} = -\sum_{k=1}^K\nabla F_{g_k}(\gamma)\cdot\nabla F_{\chi_{\gamD}(\Lambda_{\gamma} g_k-f_k)}(\gamma).
\]
%
We seek to find a direction $\eta$ for which the discrepancy decreases. As $R_{\delta\gamma}'\in H^{-1}(\Omega)$ it is known from Riesz' representation theorem that there exists a unique function in $H_0^1(\Omega)$, denoted by $\nabla_s R(\delta\gamma)$, such that 
\begin{equation}
	R_{\delta\gamma}'\eta = \inner{\nabla_s R(\delta\gamma),\eta}_{H^1(\Omega)},\enskip \eta\in H_0^1(\Omega). \label{sobograd}
\end{equation}
Now $\eta \equiv -\nabla_s R(\delta\gamma)$ points in the steepest descend direction among the viable directions. Furthermore, since $\nabla_s R(\delta\gamma)|_{\po} = 0$ the boundary condition $\dsr|_{\po} = 0$ for the approximation will automatically be fulfilled. Note that $\nabla_s R(\delta\gamma)$ is the unique solution to 
\[
	(-\Delta+1)v=R_{\delta\gamma}' \text{ in } \Omega, \quad v = 0\text{ on } \po,
\]
for which \eqref{sobograd} is the weak formulation. In each iteration step we need to determine a step size $s_i$ for an algorithm resembling a steepest descent $\delta\gamma_{i+1} = \delta\gamma_i - s_i\nabla_sR(\delta\gamma_i)$. As in \cite{GardeKnudsen2014} a Barzilai-Borwein step size rule is applied
\begin{equation}
	s_i = \frac{\normm{\delta\gamma_i-\delta\gamma_{i-1}}_{H^1(\Omega)}^2}{\innerm{\delta\gamma_i-\delta\gamma_{i-1},\nabla_s R(\delta\gamma_i)-\nabla_sR(\delta\gamma_{i-1})}_{H^1(\Omega)}}. \label{bbstepsize}
\end{equation}
A maximum step size $s_{\max}$ is enforced to avoid the situation $\innerm{\delta\gamma_i-\delta\gamma_{i-1},\nabla_s R(\delta\gamma_i)-\nabla R(\delta\gamma_{i-1})}_{H^1(\Omega)}\simeq 0$.

With inspiration from \cite{sparsa}, $s_i$ will be initialized by
\eqref{bbstepsize}, after which it is thres\-holded to lie in
$[s_{\min},s_{\max}]$ for two chosen positive constants $s_{\min}$ and
$s_{\max}$. It is noted in \cite{sparsa} that Barzilai-Borwein type
step rules lead to faster convergence if we do not restrict $\Psi$ to
decrease in every iteration. Therefore, one makes sure that the
following so-called weak monotonicity is satisfied, which compares
$\Psi(\delta\gamma_{i+1})$ with the most recent $M$ steps. Let
$\tau\in(0,1)$ and $M\in\rum{N}$, then $s_i$ is said to satisfy the
weak monotonicity with respect to $M$ and $\tau$ if the following is
satisfied
\begin{equation}
	\Psi(\delta\gamma_{i+1}) \leq \max_{i-M+1\leq j \leq i}\Psi(\delta\gamma_j) - \frac{\tau}{2s_i}\normm{\delta\gamma_{i+1}-\delta\gamma_i}_{H^1(\Omega)}^2.\label{weakmono}
\end{equation}
If \eqref{weakmono} is not satisfied, the step size $s_i$ is reduced until this is the case. 

To solve the non-linear minimization problem we iteratively solve the following linearized problem
\begin{align}
	\zeta_{i+1} &\equiv \argmin_{\delta\gamma\in H_0^1(\Omega)}\left[\frac{1}{2}\normm{\delta\gamma - (\delta\gamma_i-s_i\nabla_sR(\delta\gamma_i))}_{H^1(\Omega)}^2 + s_i\sum_{j=1}^\infty \alpha_j\absm{c_j}\right], \label{upsiloneq} \\
	\delta\gamma_{i+1} &\equiv \proja(\zeta_{i+1}). \notag
\end{align}
Here $\{\psi_j\}$ is an orthonormal basis for $H_0^1(\Omega)$ in the $H^1$-metric, and $\proja$ is a projection of $H_0^1(\Omega)$ onto $\arum_0$ to ensure that \eqref{pde} is solvable (note that $H_0^1(\Omega)$ does not embed into $L^\infty(\Omega)$, i.e. $\zeta_{i+1}$ may be unbounded). By use of the map $\mathcal{S}_\beta:\rum{R}\to\rum{R}$ defined below, known as the soft shrinkage/thresholding map with threshold $\beta > 0$,
\begin{equation}
	\ssc{\beta}(x) \equiv \sign(x)\max\{\absm{x}-\beta,0\},\enskip x\in\rum{R}, \label{softoperator}
\end{equation}
the solution to \eqref{upsiloneq} is easy to find directly (see also \cite[Section 1.5]{daubechies2004})
\begin{equation}
	\zeta_{i+1} = \sum_{j=1}^\infty \ssc{s_i\alpha_j}(d_j)\psi_j, \label{softstep}
\end{equation}
where $d_j\equiv\innerm{\delta\gamma_i-s_i\nabla_sR(\delta\gamma_i),\psi_j}_{H^1(\Omega)}$ are the basis coefficients for $\delta\gamma_i-s_i\nabla_sR(\delta\gamma_i)$. 

The projection $\proja : H_0^1(\Omega)\to \arum_0$ is defined as
\begin{equation*}
	\proja(v) \equiv T_c(\sigma_0 + v) - \sigma_0, \enskip v\in H_0^1(\Omega),
\end{equation*}
where $T_c$ is the following truncation that depends on the constant $c\in(0,1)$ in \eqref{a0ref}
\begin{equation*}
	T_c(v) \equiv \begin{cases}
		c & \text{where } v < c \text{ a.e.}, \\
		c^{-1} & \text{where } v > c^{-1} \text{ a.e.}, \\
		v & \text{else.} 
	\end{cases}
\end{equation*}
Since $\sigma_0\in H^1(\Omega)$ and $c\leq \sigma_0\leq c^{-1}$, it follows directly from \cite[Lemma 1.2]{Stampacchia_1965} that $T_c$ and $\proja$ are well-defined, and it is easy to see that $\proja$ is a projection. It should also be noted that $0\in \arum_0$ since $c\leq \sigma_0\leq c^{-1}$, thus we may choose $\delta\gamma_0 \equiv 0$ as the initial guess in the algorithm.

The algorithm is summarized in Algorithm \ref{alg1}. In the numerical
experiments in Section \ref{sec:NumericalResults} the stopping criteria is
when the step size $s_i$ gets below a threshold $s_\text{stop}$.
%
%
\begin{algorithm} \caption{Sparse Reconstruction for Partial Data EIT} \label{alg1}
\begin{algorithmic}
	\State Set $\delta\gamma_0 := 0$.
	\While{stopping criteria not reached}
		\State Set $\gamma_i := \sigma_0 + \delta\gamma_i$.
		\State Compute $\Psi(\delta\gamma_i)$.
		\State Compute $R_{\delta\gamma_i}' := -\sum_{k=1}^K\nabla F_{g_k}(\gamma_i)\cdot\nabla F_{\chi_{\gamD}(\Lambda_{\gamma_i} g_k-f_k)}(\gamma_i)$.
		\State Compute $\nabla_s R(\delta\gamma_i)\in H_0^1(\Omega)$ such that $R_{\delta\gamma_i}'\ds = \innerm{\nabla_s R(\delta\gamma_i),\ds}_{H^1(\Omega)}$.
		\State Compute step length $s_i$ by \eqref{bbstepsize}, and decrease it till \eqref{weakmono} is satisfied.
		\State Compute the basis coefficients $\{d_j\}_{j=1}^\infty$ for $\delta\gamma_i-s_i\nabla_sR(\delta\gamma_i)$.
		\State Update $\delta\gamma_{i+1} := \proja\left(\sum_{j=1}^\infty \ssc{s_i\alpha_j}(d_j)\psi_j\right)$.	
	\EndWhile
	\State Return final iterate of $\delta\gamma$.
\end{algorithmic}
\end{algorithm}

\section{Prior Information} \label{sec:prior}

Prior information is typically introduced in the penalty term $P$ for Tikhonov-like functionals, and here the regularization parameter determines how much this prior information is enforced. In the case of sparsity regularization this implies knowledge of how sparse we expect the solution is in general. Instead of applying the same prior information for each basis function, a distributed parameter is applied. Let
\[
	\alpha_j \equiv \alpha\mu_j,
\] 
where $\alpha$ is a usual regularization parameter, corresponding to the case where no prior information is considered about specific basis functions. The $\mu_j\in(0,1]$ will be used to weigh the penalty depending on whether a specific basis function should be included in the expansion of $\delta\sigma$. The $\mu_j$ are chosen as
\[
\mu_j = \begin{cases}
	1, \quad & \text{no prior on $c_j$}, \\ \sim 0, & \text{prior that $c_j\neq 0$},	
\end{cases}
\]
i.e. if we know that a coefficient in the expansion of $\dsr$ should be non-zero, we can choose to penalize that coefficient less. Ideally, if we know that a coefficient should be non-zero we would actually choose $\mu_j = 0$, however, in most cases we might only have an estimate of which basis functions that should be included in the solution. Choosing $\mu_j = 0$ will effectively remove any regularization of the corresponding basis function, and may introduce further instability into the numerical algorithm.

\subsection{Applying the FEM Basis}

In order to improve the sparsity solution for finding small
inclusions, it seems appropriate to include prior information about
the support of the inclusions. There are different methods available
for obtaining such information assuming piecewise constant
conductivity \cite{HarrachUllrich2013,kirsch2007} or real analytic
conductivity \cite{HarrachSeo2010}. The idea is to be able to apply such information in the sparsity algorithm in order to get good contrast reconstruction while
maintaining the correct support, even for the partial data problem. 

Suppose that as a basis we consider a finite element method (FEM)
basis $\{\psi_j\}_{j=1}^N$ for the subspace $V_h\subseteq H_0^1(\Omega)$ of piecewise affine
functions on each element. Let $\delta\gamma\in V_h$ with mesh nodes $\{x_j\}_{j=1}^N$, then $\delta\gamma(x) = \sum_{j=1}^N \delta\gamma(x_j)\psi_j(x)$ and $\psi_j(x_k) = \delta_{j,k}$, i.e.\ for each node there is a basis function for which the coefficient
contains local information about the expanded function; this is
convenient when applying prior information about the support of an
inclusion.

When applying the FEM basis for mesh nodes $\{x_j\}_{j=1}^N$, the corresponding functional is 
\[
	\Psi(\delta\gamma) = \frac{1}{2}\sum_{k=1}^K \normm{\Lambda_{\sigma_0+\delta\gamma}g_k - f_k}_{L^2(\gamD)}^2 + \sum_{j=1}^N \alpha_j\absm{\delta\gamma(x_j)}.
\]
It is evident that the penalty corresponds to determining inclusions with small support, and prior information on the sparsity corresponds to prior information on the support of $\delta\sigma$. We cannot directly utilize \eqref{softstep} due to the FEM basis not being an orthonormal basis for $H_0^1(\Omega)$, and instead we suggest the following iteration step as in \cite{GardeKnudsen2014}:
\begin{align}
	\zeta_{i+1}(x_j) &= \ssc{s_i\alpha_j/\normm{\psi_j}_{L^1(\Omega)}}(\delta\gamma_i(x_j)-s_i\nabla_sR(\delta\gamma_i)(x_j)),\enskip j=1,2,\dots,N, \label{femupdate} \\
	\delta\gamma_{i+1} &= \proja(\zeta_{i+1}). \notag
\end{align}
Note that the regularization parameter will depend quite heavily on the discretization of the mesh, i.e. for the same domain a good regularization parameter $\alpha$ will be much larger on a coarse mesh than on a fine mesh. Instead we can weigh the regularization parameter according to the mesh cells, by having $\alpha_j \equiv \alpha\beta_j\mu_j$. This leads to a discretization of a weighted $L^1$-norm penalty term:
\[
	\alpha\int_{\Omega} f_\mu \absm{\delta\gamma}\,dx \simeq \alpha\sum_j \beta_j\mu_j\absm{\delta\gamma(x_j)}, 
\]
where $f_\mu : \Omega\to (0,1]$ is continuous and $f_\mu(x_j) = \mu_j$. The weights $\beta_j$ consists of the node volume computed in 3D as 1/4 of the volume of $\supp\psi_j$ (if using a mesh of tetrahedrons). This corresponds to splitting each cell's volume evenly amongst the nodes, and it will not lead to instability on a regular mesh. This will make the choice of $\alpha$ almost independent of the mesh, and will be used in the numerical examples in the following section.
\begin{remark}
	The corresponding algorithm with the FEM basis is the same as Algorithm \ref{alg1}, except that the update is applied via \eqref{femupdate}.
\end{remark}

\section{Numerical Examples}\label{sec:NumericalResults}

In this section we illustrate, through a few examples, the numerical algorithm implemented by use of the finite element library FEniCS \cite{logg2012a}. First we consider the full data case $\gamD = \gamN = \partial\Omega$ without and with prior information, and then we do the same for the partial data case.

For the following examples $\Omega$ is the unit ball in $\rum{R}^3$. The numerical phantom consists of a background conductivity with value $1$, a smaller ball inclusion with value $2$ centred at $(-0.09,-0.55,0)$ and with radius $0.35$, and two large ellipsoid inclusions with value $0.5$. One ellipsoid is centred at $(-0.55\sin(\tfrac{5}{12}\pi), 0.55\cos(\tfrac{5}{12}\pi), 0)$ and with semi-axes of length $(0.6,0.3,0.3)$. The other ellipsoid is centred at $(0.45\sin(\tfrac{5}{12}\pi),\allowbreak 0.45\cos(\tfrac{5}{12}\pi), 0)$ and with semi-axes of length $(0.7,0.35,0.35)$. The two ellipsoids are rotated respectively $\tfrac{5}{12}\pi$ and $-\tfrac{5}{12}\pi$ about the axis parallel to the Z-axis and through the centre of the ellipsoids; see Figure \ref{fig:3d_phantom}. 

\begin{figure}[htb]
\includegraphics[width = 0.6\textwidth]{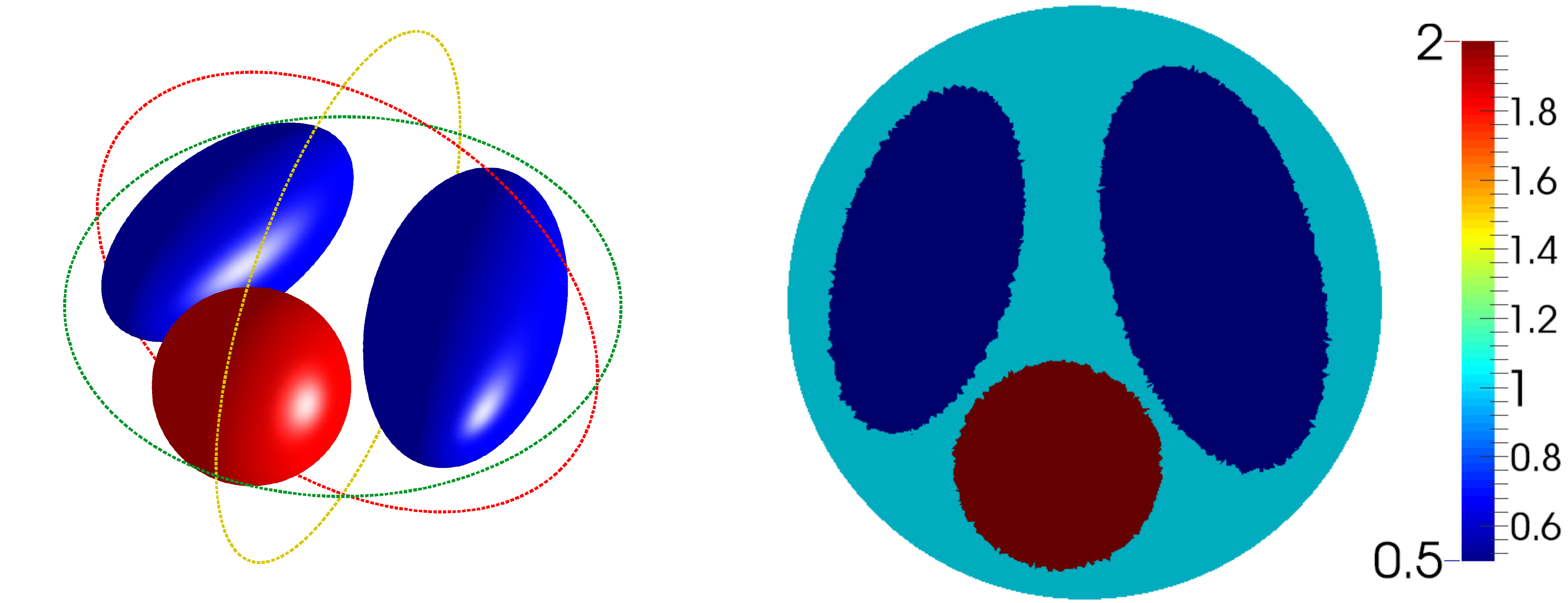}
\caption{\textbf{Left:}\ 3D illustration of the numerical phantom. \textbf{Right}: 2D slice $(z=0)$ of the numerical phantom.} \label{fig:3d_phantom}
\end{figure}


In this paper we do not consider choice rules for $\alpha$; it is chosen manually by trial and error. The parameters are chosen as $\sigma_0\equiv 1$, $M = 5$, $\tau = 10^{-5}$, $s_{\min} = 1$, $s_{\max} = 1000$, and the stopping criteria is when the step size is below $s_{\text{stop}} = 10^{-3}$. Let $Y^m_n$ denote Laplace's spherical harmonics of degree $n$ and order $m$, with real form
\begin{equation}
	\tilde{Y}_n^m = \begin{dcases} \tfrac{i}{\sqrt{2}}(Y^m_n-(-1)^m Y^{-m}_n) & \text{ for } m < 0, \\
	Y_n^0 & \text{ for } m = 0, \\
	\tfrac{1}{\sqrt{2}}(Y^{-m}_n + (-1)^m Y_n^m) & \text{ for } m>0. \end{dcases}
\end{equation}
The Neumann data consists of $\tilde{Y}_n^m$ for $-n\leq m\leq n$ and $n=1,2,\dots,5$, i.e. a total of $K = 35$ current patterns. For the partial data examples a half-sphere is used for local data $\Gamma = \gamN = \gamD$, and the corresponding Neumann data are scaled to have the same number of periods as the full data examples. 


When applying prior information, the coefficients $\mu_j$ are chosen as $10^{-2}$ where the support of $\dsr$ is assumed, and $1$ elsewhere. The assumed support is a $10 \%$ dilation of the true support, to show that this inaccuracy in the prior information still leads to improved reconstructions.

For the simulated Dirichlet data, the forward problem is solved on a very fine mesh, and afterwards interpolated onto a different much coarser mesh in order to avoid inverse crimes. White Gaussian noise has been added to the Dirichlet data $\{f_k\}_{k=1}^K$ on the discrete nodes on the boundary of the mesh. The standard deviation of the noise is chosen as $\epsilon \max_{k}\max_{x_j\in\gamD}\absm{f_k(x_j)}$ as in \cite{GardeKnudsen2014}, where \mbox{$\epsilon = 10^{-2}$} corresponding to 1\% noise.

\begin{figure}[htb]
\includegraphics[width = .55\textwidth]{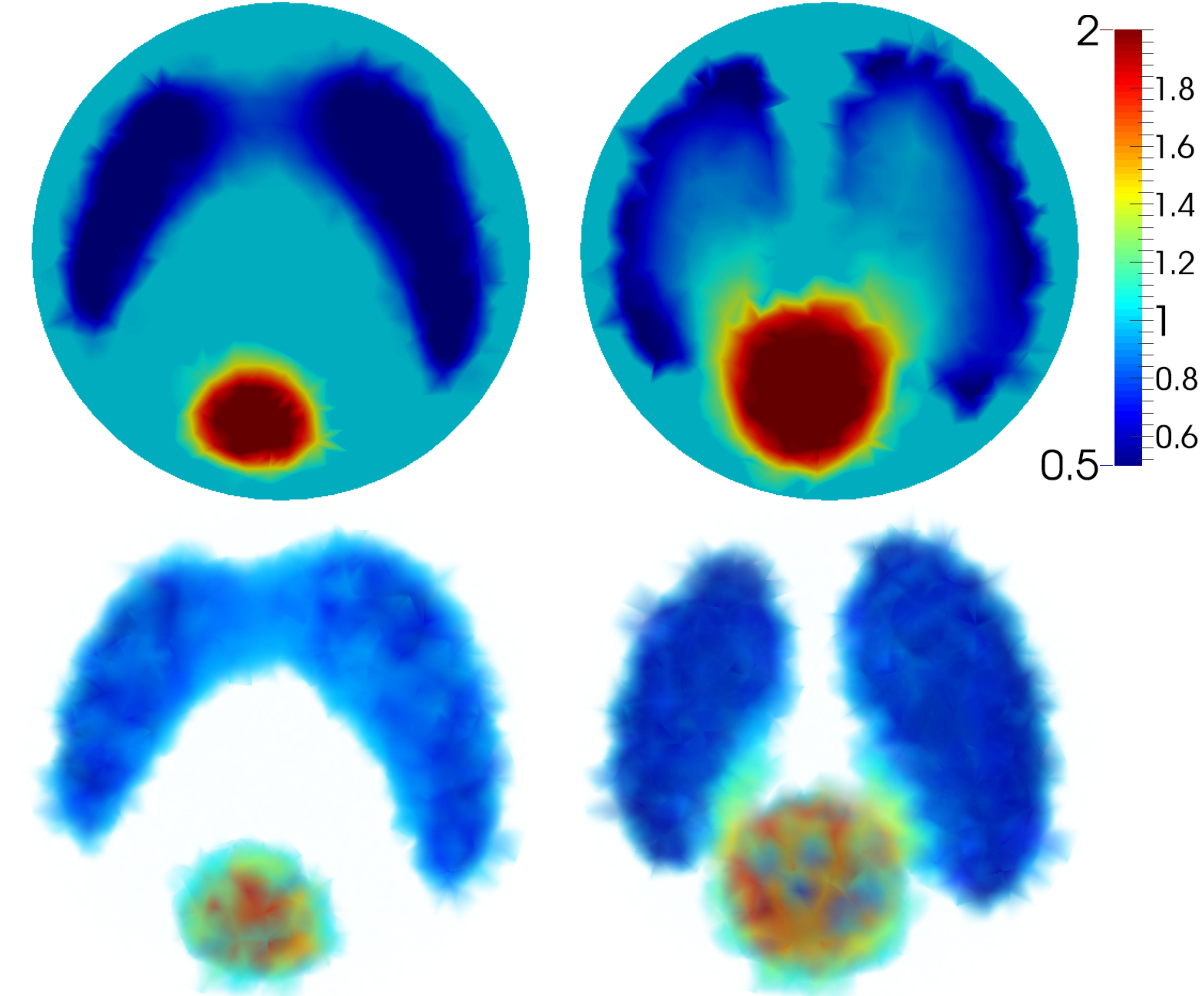}
\caption{\textbf{Top: }2D slices $(z=0)$ through centre of ball domain. \textbf{Bottom:} 3D volume plot where the background value of $1$ is made transparent. \textbf{Left:} reconstruction with full data and no spatial prior information. \textbf{Right:} reconstruction with full data and overestimated support as additional prior information.} \label{fig:block_one}
\end{figure}


Figure \ref{fig:block_one} shows 2D slices of the numerical phantom and reconstructions from full boundary data. It is seen that the reconstructions attain the correct contrast, and close to the boundary gives good approximations to the correct support for the inclusions. Using the overestimated support as prior information gives vastly improved reconstruction further away from the boundary. This holds for the entire 3D reconstruction as seen in Figure \ref{fig:block_one}, and makes it possible to get a reasonable separation of the inclusions.

\begin{figure}[htb]
\includegraphics[width = .85\textwidth]{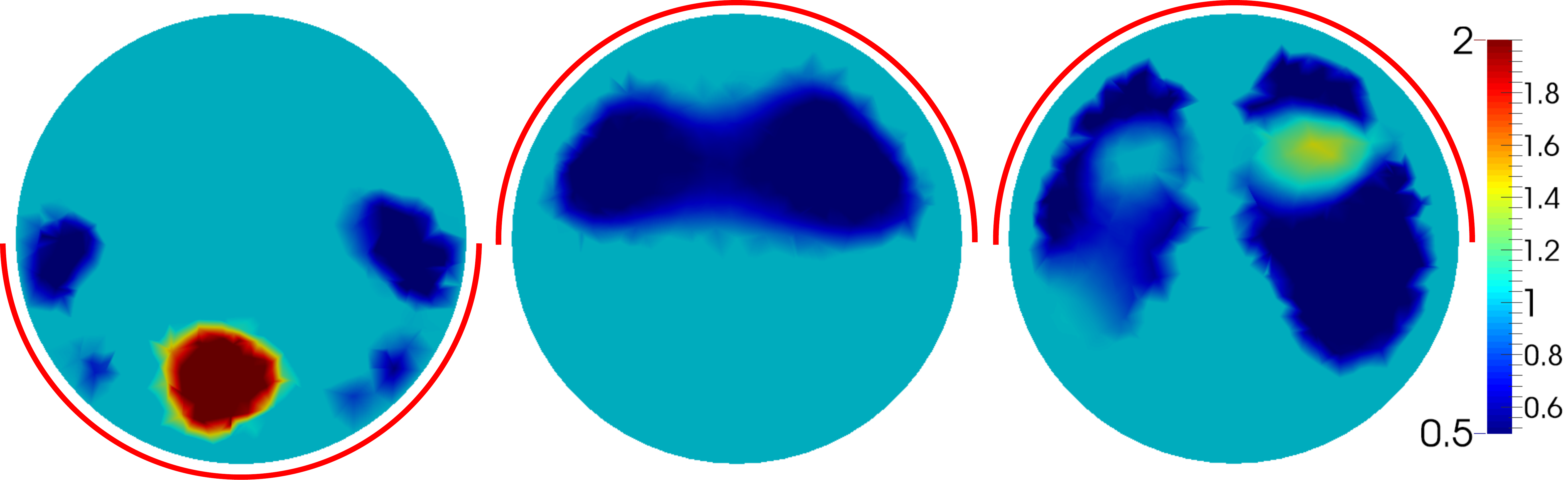}
\caption{2D slices  $(z=0)$ through centre of ball domain \textbf{Left:} reconstruction with data on lower half-sphere and no spatial prior information. \textbf{Middle:} reconstruction with data on upper half-sphere and no spatial prior information. \textbf{Right:} reconstruction with data on upper half-sphere and overestimated support as additional prior information.} \label{fig:block_two}
\end{figure}

From Figure \ref{fig:block_two} 2D slices of partial data reconstructions are shown, and it is evident that far from the measured boundary the reconstructions suffer severely. Reconstructing with data on the lower part of the sphere gives a reasonable reconstruction with correct contrast for the ball inclusion, however the larger inclusions are hardly reconstructed at all. 

With data on the top half of the sphere yields a reconstruction with no clear separation of the ellipsoid inclusions, which is much improved by use of the overestimated support. There is however an artefact in one of the reconstructed inclusions that could correspond to data from the ball inclusion, which is not detected in the reconstruction even when the additional prior information is used. 

The reconstructions shown here are consistent with what was observed in \cite{GardeKnudsen2014} for the 2D problem, and it is possible to reconstruct the correct contrast even in the partial data case, and also get decent local reconstruction close to the measured boundary. However, the partial data reconstructions seems to be slightly worse in 3D when no prior information about the support is applied.


\end{document}